\def\nc{\newcommand}
\newcommand{\N}{\mathbb{N}}
\newcommand{\R}{\mathbb{R}}
\newcommand{\eps}{\varepsilon}
\newcommand{\calB}{\mathcal B}
\def\eps{\varepsilon}
\def\ohe{{\overline{h}}_\eps}
\def\uh{{\underline{h}}}
\def\barh{{\overline{h}}}
\def\underh{{\underline{h}}}
\def\EeL{{E_{\varepsilon,L}}}
\newcommand{\tnorm}[1]{%
  {\left\vert\kern-0.25ex\left\vert\kern-0.25ex\left\vert {#1} 
   \right\vert\kern-0.25ex\right\vert\kern-0.25ex\right\vert}}
\nc{\Halmos}    {\ \raisebox{0.6ex}  {\framebox[0.9ex]{
			   \rule[0ex]{0ex}{0.5ex}
			    }}}
\theoremstyle{plain}
\newtheorem{theorem}{Theorem}[section]
\newtheorem{proposition}[theorem]{Proposition}
\newtheorem{lemma}[theorem]{Lemma}
\newtheorem{corollary}[theorem]{Corollary}
  \newtheoremstyle{TheoremNum}
        {}             
        {}              
        {\itshape}                      
        {}                              
        {\bfseries}                     
        {.}                             
        { }                             
        {\thmname{#1}\thmnote{ \bfseries #3}}
   \theoremstyle{TheoremNum}
\theoremstyle{definition}
\newtheorem{definition}[theorem]{Definition}
\newtheorem{remark}[theorem]{Remark}
\newtheoremstyle{remarkbreak}
  {}
  {}
  {\upshape}
  {}
  {\bfseries}
  {.}
  {\newline}
  {}
\theoremstyle{remarkbreak}
\nc{\dual}[1]{\langle #1 \rangle}
\def\bfone{{\mathbf 1}}
\nc{\Tr}{\mbox{\rm Tr}}
\nc{\mytrace}[1]{\Tr \left( \sstrut #1 \right)}
\newcommand{\Lbb}{\mathbb{L}}
\newcommand{\dd}{\;{\rm d}}
\nc{\mat}[2]  {\left(  \! \begin{array}{#1} #2 \end{array}\! \right)}
\begin{document}

\title {Ratio ergodic theorems: From Hopf to Birkhoff and Kingman}

\author[H.H. Rugh]{Hans Henrik Rugh}
\address{Hans Henrik Rugh\\
Laboratoire de Math\'ematiques d'Orsay, 
Universit\'e Paris-Sud, CNRS, 
Universit\'e Paris-Saclay\\
91405 Orsay Cedex, France.
Email:  hans-henrik.rugh@math.u-psud.fr}

\author[Damien Thomine]{Damien Thomine}
\address{Damien Thomine\\
Laboratoire de Math\'ematiques d'Orsay, 
Universit\'e Paris-Sud, CNRS, 
Universit\'e Paris-Saclay\\
91405 Orsay Cedex, France.
Email:  damien.thomine@math.u-psud.fr}


\begin{abstract} 
Hopf's ratio ergodic theorem has an inherent symmetry
which we exploit to provide a simplification of standard
proofs of Hopf's and Birkhoff's ergodic theorems.
We also present a ratio ergodic theorem 
for conservative transformations 
on a $\sigma$-finite measure space, generalizing
Kingman's ergodic theorem for subadditive sequences
and generalizing previous results by Akcoglu and Sucheston. 
\end{abstract}

\maketitle

\section{Introduction and statement of results}

Birkhoff's pointwise ergodic theorem~\cite{Birkhoff:1931} 
is a key tool in ergodic theory. 
It admits many notable generalizations, 
including Hopf's ratio ergodic theorem~\cite{Hopf:1937}, 
Kingman's subadditive ergodic theorem~\cite{Kingman:1968}, 
and more recently Karlsson-Ledrappier and Gou\"ezel-Karlsson theorems 
on cocycles of isometries~\cite{KarlssonLedrappier:2006,GouezelKarlsson:2015}. 
Since the work of Kamae~\cite{Kamae:1982} and Katznelson and
Weiss~\cite{KatznelsonWeiss:1982}, 
there exist very short and easy proofs of Birkhoff's ergodic theorem.
These proofs have also been adapted to Hopf's and Kingman's ergodic theorems 
(\cite{KamaeKeane:1997} and~\cite{KatznelsonWeiss:1982,Steele:1989},
respectively).

\smallskip

In this article, we 
provide a proof of Hopf's and Kingman's ergodic theorem, 
in the context of conservative transformations preserving
a $\sigma$-finite measure.
We follow the argument of Katznelson and Weiss~\cite{KatznelsonWeiss:1982}
but add a noticeable twist:
the statement of the ratio ergodic theorem 
has a natural symmetry which is not
present in Birkhoff's ergodic theorem, a symmetry
which can be 
leveraged to simplify proofs. This makes, in our opinion, 
Hopf's theorem more fundamental, 
with Birkhoff's theorem now appearing as a corollary 
(the inverse point of view is given
in e.g.~\cite{Zweimuller:2004}, where Hopf's theorem 
is deduced from Birkhoff's by inducing).

\smallskip

As for the Kingman ratio ergodic theorem
on a $\sigma$-finite measure space, a similar result
was obtained by Akcoglu and Suchestom~\cite{AkcogluSucheston:1978}
 under an additional 
integrability assumption.
Our result does not make this
assumption and the proof is significantly simpler (in our opinion).
As the reader may note there are
no significant complications coming from working
with $\sigma$-finite measures, 
but some parts may be simplified quite a lot
if one assumes ergodicity.

\begin{definition}

 Consider a $\sigma$-finite measure space $(X,\calB,\mu)$ and a measure preserving
 transformation $T:X\to X$. The transformation $T$ is said to be \emph{conservative} if:
 \begin{equation}
  \forall A\in \calB \; : \ \ \mu(A)>0 \ \ \Rightarrow 
  \exists n\geq 1: \mu(A\cap T^{-1} A)>0 .
 \end{equation}
 A subset $A \subset X$ is \emph{$T$-invariant} if $T^{-1}A = A$, and a function 
 $f : X \to \R$ is \emph{$T$-invariant} if $f \circ T = f$. The transformation $T$ 
 is \emph{ergodic} (for $\mu$) if for any measurable $T$-invariant subset $A$, 
 either $\mu (A) = 0$ or $\mu (X \setminus A) = 0$.
\end{definition}

Unless stated otherwise, we make throughout the standard assumption that 
$(X,\calB,\mu,T)$ is a conservative measure-preserving transformation on
a $\sigma$-finite measure space. Most of the time, ergodicity shall not be assumed. 


\smallskip

Given any $f : X \to \R$, we write $S_n f := \sum_{k=0}^{n-1} f \circ T^k$ 
for the Birkhoff sums. Our first goal is to give a proof of the following well-known:

\begin{theorem}[Hopf's ratio ergodic theorem]
\label{thm:Hopf}

 Let $(X,\calB,\mu,T)$ be a conservative, measure preserving transformation 
 on a $\sigma$-finite measure space. Let $f$, $g\in \Lbb^1(X,\mu)$ with $g > 0$ almost everywhere. 
 Then the following limit exists $\mu$-almost everywhere:
 \begin{equation}
   h 
   := \lim_{n \to \infty} \frac{S_n f}{S_n g}.
 \end{equation}
 The function $h$ is finite $\mu$-almost everywhere, $T$-invariant, and for any $T$-invariant subset $A\in \calB$:
 \begin{equation}
   \int_A f \dd \mu 
   = \int_A h g \dd \mu.
 \end{equation}
\end{theorem}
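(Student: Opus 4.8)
The plan is to work with the two extreme ratios $\overline{h} := \limsup_{n} S_n f / S_n g$ and $\underline{h} := \liminf_{n} S_n f / S_n g$ and to exploit the elementary symmetry $\underline{h}(f,g) = -\,\overline{h}(-f,g)$, which lets us recover every statement about $\underline{h}$ from the corresponding one about $\overline{h}$ applied to $-f$. First I would record two preliminary facts: that $S_n g \ge g > 0$ for every $n \ge 1$ (so all ratios are well defined), and that $\overline{h}$ and $\underline{h}$ are $T$-invariant. Invariance follows by writing $S_n(\cdot)\circ T = S_{n+1}(\cdot) - (\cdot)$ and checking that $\frac{S_{n+1}f - f}{S_{n+1}g - g} - \frac{S_{n+1}f}{S_{n+1}g} = \frac{g\,(S_{n+1}f/S_{n+1}g) - f}{S_{n+1}g - g}$ tends to $0$ almost everywhere, using conservativity in the form $S_n g \to \infty$ a.e.\ (valid for any strictly positive $g \in \Lbb^1$). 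With invariance in hand, for every $c \in \R$ and every $T$-invariant $A$ the set $E := A \cap \{\overline{h} > c\}$ is again $T$-invariant.

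The whole theorem will reduce to a single one-sided integral inequality,
\[
  \int_A \overline{h}\, g \dd\mu \le \int_A f \dd\mu
  \qquad\text{for every } T\text{-invariant } A,
\]
and in fact to the pointwise stopping statement $(\star)$: for every $c \in \R$ and every $T$-invariant $A$, one has $\int_{E} f \dd\mu \ge c \int_{E} g \dd\mu$, with $E = A \cap \{\overline{h} > c\}$. Indeed, integrating $(\star)$ against $\dd c$ (a layer-cake decomposition of $\overline{h}$ with respect to the finite measure $g \dd\mu$) yields the displayed inequality, and letting $c \to \infty$ in $(\star)$ shows $\overline{h} < +\infty$ almost everywhere. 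Applying everything to $(-f,g)$ and using the symmetry gives $\int_A f \le \int_A \underline{h}\,g$ and $\underline{h} > -\infty$ a.e. Since $\underline{h} \le \overline{h}$ pointwise and $g > 0$, the chain $\int_A \overline{h}\,g \le \int_A f \le \int_A \underline{h}\,g$ forces $\int_A(\overline{h}-\underline{h})\,g = 0$ on every invariant $A$, hence $\overline{h} = \underline{h} =: h$ almost everywhere; $h$ is then finite and $T$-invariant, and the same chain collapses to $\int_A f = \int_A h g$.

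The heart of the proof, and the main obstacle, is $(\star)$, which I would establish by the stopping-time/tiling argument of Katznelson--Weiss. On $E$ the condition $\overline{h} > c$ means $S_n f > c\, S_n g$ for infinitely many $n$, so $\tau := \min\{n \ge 1 : S_n f > c\, S_n g\}$ is finite a.e.\ on $E$. Choosing $L$ so large that the tail $\{\tau > L\}$ has small mass, I would replace $f$ by $\tilde f := f$ on $\{\tau \le L\}$ and $\tilde f := \max(f,(c+1)g)$ on $\{\tau > L\}$; then $\tilde f \ge f$ guarantees that the modified stopping time $\tilde\tau := \min\{n : S_n \tilde f > c\, S_n g\}$ satisfies $\tilde\tau \le L$ everywhere on $E$ (at a bad point it stops at once because $\tilde f > c g$ there, and at a good point the original success at time $\tau \le L$ survives since $S_n\tilde f \ge S_n f$). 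Tiling each orbit segment $x, Tx, \dots, T^{N-1}x$ greedily by consecutive $\tilde\tau$-blocks, each block contributes $S_{\tilde\tau}\tilde f > c\, S_{\tilde\tau} g$, while the final incomplete block occupies fewer than $L$ of the last positions; this gives $S_N \tilde f \ge c\, S_N g - R_N$ with a remainder controlled by $L$ shifts of $|\tilde f| + |c|\,g$. Integrating over the invariant set $E$, using $\int_E S_N \phi = N \int_E \phi$ for $\phi \in \{\tilde f, g\}$, dividing by $N$ and letting $N \to \infty$ yields $\int_E \tilde f \ge c \int_E g$; finally $\int_E(\tilde f - f) \le (|c|+1)\int_{E \cap \{\tau > L\}} g + \int_{E \cap \{\tau > L\}} |f| \to 0$ as $L \to \infty$ (dominated convergence, since $\{\tau > L\}$ decreases to a null set and $f,g \in \Lbb^1$), which upgrades the inequality to $(\star)$.

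The delicate points, where I expect to spend the most care, all sit inside this last paragraph: verifying $\tilde\tau \le L$ after the modification, bounding the boundary block by exactly the last $L$ positions so that measure-preservation turns it into $L \int_E(|\tilde f| + |c|\,g)$, and checking that the error $\int_E(\tilde f - f)$ vanishes as $L \to \infty$ using only $f, g \in \Lbb^1$ and $g > 0$ --- this is precisely where the absence of any extra integrability hypothesis, and the $\sigma$-finiteness of $\mu$, must be handled (here by measuring everything against the finite measure $g \dd\mu$). The symmetry reduction and the layer-cake passage from $(\star)$ to the integral inequality are, by contrast, routine.
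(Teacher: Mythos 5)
Your proof is correct, but it takes a genuinely different route from the paper's. Both arguments run the Katznelson--Weiss stopping-time/tiling construction only once, packaged as a one-sided inequality, and then invoke a symmetry to get the matching bound on $\underline{h}$ for free; the difference lies in which inequality and which symmetry. The paper's Lemma~\ref{lem:Principal} is stated for non-negative \emph{super-additive} sequences and compares $a_k$ with $S_k(\overline{h}_\varepsilon g)$, where $\overline{h}_\varepsilon := \overline{h}/(1+\varepsilon \overline{h})$; Hopf's theorem then follows from three applications of that single lemma, to the pairs $(f,g)$, $(g,f)$ and $(\overline{h}g,f)$, exploiting the numerator--denominator symmetry $\limsup_n S_n g/S_n f = 1/\underline{h}$. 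That symmetry requires $f>0$, and the signed case is recovered at the end by writing $f = f_+ - f_-$ with \emph{strictly} positive integrable parts (using $\sigma$-finiteness). You instead prove a level-set inequality $(\star)$, comparing $S_n f$ with $c\,S_n g$ on the invariant sets $\{\overline{h}>c\}$, valid for signed $f$ from the start, and use the sign symmetry $\underline{h}(f,g) = -\overline{h}(-f,g)$. The costs on your side are the modification $\tilde f = \max(f,(c+1)g)$ on $\{\tau>L\}$ together with remainder control by $|\tilde f| + |c|g$ over the last $L$ positions (in the paper's lemma all terms are non-negative, so dropped terms have the right sign automatically), and the layer-cake/discretization step assembling $(\star)$ into $\int_A \overline{h} g \dd\mu \le \int_A f \dd\mu$, which the paper's third application (to $(\overline{h}g,f)$) renders unnecessary. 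The gains: you never need positivity of $f$, hence no decomposition step; the paper's gain is that its lemma is reused verbatim for the $\sigma$-finite Kingman theorem (Proposition~\ref{prop:SuperAdditif}).

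Two points in your write-up need small, non-structural repairs. First, your justification of the $T$-invariance of $\overline{h}$ --- that the displayed difference of ratios tends to $0$ a.e.\ --- can fail on $\{\overline{h}=+\infty\}$: if $S_n f$ grows like $(S_n g)^2$ along a subsequence, that difference tends to $g$, not to $0$. Invariance still holds, because $S_{n+1}f/S_{n+1}g = \alpha_n\,\bigl((S_n f\circ T)/(S_n g\circ T)\bigr) + \beta_n$ with $0<\alpha_n\to 1$ and $\beta_n \to 0$ a.e., and such a transformation preserves the limsup even when it is infinite; this matters because you use invariance of $\{\overline{h}>c\}$ \emph{before} knowing $\overline{h}<\infty$. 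Second, ``integrating $(\star)$ against $\dd c$'' should be carried out as a discretization over the invariant level sets $\{k\delta < \overline{h} \le (k+1)\delta\}$, $k\in\Z$, applying $(\star)$ on each with $c=k\delta$ and summing; the sum is legitimate because $(\star)$ bounds the positive part of $\int_A \overline{h} g \dd\mu$ by $\int_A |f| \dd\mu + \delta\int_A g \dd\mu$. Likewise the final cancellation $\overline{h}=\underline{h}$ is cleanest on the invariant sets $\{-m<\underline{h}\}\cap\{\overline{h}<M\}$, where both integrands are dominated by $\max(m,M)\,g$, so that the subtraction of integrals is licit.
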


Note that, when $f>0$ as well, there is a 
natural symmetry between $f$ and $g$, 
which we will exploit in our proof.
This symmetry is lost in Birkhoff's version, where $g\equiv 1$. 

\smallskip

We will proceed 
to prove
a ratio version of Kingman's theorem for subadditive sequences.
Recall that a sequence $(a_n)_{n\geq 1}$ of measurable functions
is said to be subadditive (with respect
to $T$) if for $n$, $m\geq 1$, we have $\mu$-almost everywhere:
\begin{equation}
 a_{n+m} 
 \leq a_n + a_m \circ T^n.
\end{equation}

\begin{theorem} [A Kingman ratio ergodic theorem]
 \label{thm:Kingman}
 
 Let $(X,\calB,\mu,T)$ be a conservative, measure preserving transformation 
 on a $\sigma$-finite measure space. 
  Let $(a_n)_{n\geq 1}$ be a sub-additive measurable sequence
 of functions with values in $[-\infty,+\infty]$,
 and with $(a_1)^+ \in \Lbb^1 (X, \mu)$. 
 Let $g\in \Lbb^1(X,\mu)$ with $g>0$ almost everywhere. 
 Then the following limit exists $\mu$-almost everywhere:
 \begin{equation*}
 h 
 := \lim_{n\to +\infty} \frac{a_n}{S_n g} \in [-\infty,+\infty).
 \end{equation*}
 If $A\in \calB$ is a $T$-invariant set, then
 \begin{equation*}
 \inf_n \frac{1}{n} \int_A a_n \dd \mu
 = \lim_n \frac{1}{n} \int_A a_n \dd \mu
 = \int_A h g \dd \mu 
 \in [-\infty,+\infty).
 \end{equation*}
\end{theorem}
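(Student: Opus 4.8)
The plan is to follow the Katznelson--Weiss scheme, now using Hopf's theorem (Theorem~\ref{thm:Hopf}) as an \emph{input} rather than an endpoint. Throughout set $\overline{h}:=\limsup_n a_n/S_n g$ and $\underline{h}:=\liminf_n a_n/S_n g$. Two preliminary facts are essentially free. First, since $T$ is conservative and $g>0$ is integrable, $S_n g\to+\infty$ $\mu$-almost everywhere. Second, iterating subadditivity gives $a_n\le S_n a_1\le S_n(a_1^+)$, so applying Theorem~\ref{thm:Hopf} to the pair $(a_1^+,g)$ shows that $S_n(a_1^+)/S_n g$ converges a.e. to a finite limit; hence $\overline{h}<+\infty$ a.e., and any limit that exists is automatically $<+\infty$.

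Next I would establish $T$-invariance of $\overline{h}$ and $\underline{h}$. From $a_{n+1}\le a_1+a_n\circ T$ and $S_{n+1}g=g+(S_n g)\circ T$, dividing and using $S_ng\to\infty$ yields $\overline{h}\le\overline{h}\circ T$; the rearranged inequality $a_n\circ T\ge a_{n+1}-a_1$ gives $\underline{h}\le\underline{h}\circ T$. Invoking the standard fact that for a conservative measure-preserving $T$ any $\phi$ with $\phi\le\phi\circ T$ satisfies $\phi=\phi\circ T$ a.e., both $\overline{h}$ and $\underline{h}$ are $T$-invariant. For the integral statement, fix a $T$-invariant $A$. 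Subadditivity together with $T^{-n}A=A$ shows that $c_n:=\int_A a_n\dd\mu$ is subadditive, i.e. $c_{n+m}\le c_n+c_m$; the bound $\int_A a_n^+\dd\mu\le n\int_A a_1^+\dd\mu<\infty$ keeps each $c_n\in[-\infty,+\infty)$. Fekete's lemma then gives $\inf_n c_n/n=\lim_n c_n/n=:L_A$, which is the first displayed equality.

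The core is a stopping-time argument producing the a.e. limit and the bound $L_A\le\int_A h\,g\dd\mu$. Fix $\eps>0$ and let $\tau(x):=\min\{n\ge1:a_n(x)\le(\underline{h}(x)+\eps)S_ng(x)\}$, finite a.e. by definition of $\underline{h}$; truncate at a level $N$ chosen so that $\int_{\{\tau>N\}}a_1^+\dd\mu$ is small. Tiling the orbit segment $\{0,\dots,n-1\}$ by the maximal good blocks $[k,k+\tau(T^kx))$, capped at $N$ and with the remaining ``bad'' sites estimated by $a_1\le a_1^+$, subadditivity telescopes the block contributions of $S_\tau g$ into $S_n g$ and, using invariance of $\underline{h}$, yields
\begin{equation*}
a_n\le(\underline{h}+\eps)\,S_n g+E_n,
\end{equation*}
where $E_n$ is supported on the $\{\tau>N\}$-region. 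Dividing by $S_n g\to\infty$ and letting $\eps\to0$, $N\to\infty$ gives $\overline{h}\le\underline{h}$ a.e., so $h:=\lim_n a_n/S_n g$ exists in $[-\infty,+\infty)$. Integrating the displayed inequality over $A$, using $\int_A\underline{h}\,S_n g\dd\mu=n\int_A\underline{h}\,g\dd\mu$ and $\frac1n\int_A E_n\dd\mu\to0$, produces $L_A\le\int_A h\,g\dd\mu$.

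The main obstacle is the reverse inequality $\int_A h\,g\dd\mu\le L_A$, which controls how negative $a_n$ can become and is the genuinely asymmetric part of Kingman's theorem, since subadditivity delivers only upper bounds on $a_n$. Here I would use the increment decomposition $a_n=\sum_{j=0}^{n-1}u_{n-j}\circ T^j$ with $u_k:=a_k-a_{k-1}\circ T\le a_1$ (so $u_k^+\le a_1^+\in\Lbb^1$). Since $A$ is invariant this gives $\frac1n\int_A a_n\dd\mu=\frac1n\sum_{k=1}^n\int_A u_k\dd\mu$, so $L_A$ is a Ces\`aro limit of $\int_A u_k\dd\mu$; heuristically $u_k\approx hg$ because $S_kg-(S_{k-1}g)\circ T=g$. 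Making this precise via Fatou's lemma on the finite measure $g\dd\mu|_A$, with the uniform upper domination $u_k\le a_1^+$ supplied above, should yield $\int_A h\,g\dd\mu\le L_A$; the delicate point is the interchange of limit and integral when $h$ (hence $hg$) takes the value $-\infty$, which I would handle by truncating $h$ at $-M$, applying monotone convergence, and checking that the inequalities survive in $[-\infty,+\infty)$. Combined with the previous paragraph this gives $L_A=\int_A h\,g\dd\mu$ and closes the proof.
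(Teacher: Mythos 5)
The genuine gap is the reverse inequality $\int_A h g \dd\mu \le \inf_n \frac1n \int_A a_n \dd\mu$, which you correctly identify as the asymmetric core of the theorem but then only gesture at. The increment decomposition $u_k := a_k - a_{k-1}\circ T$ gives exactly what you say, namely that $L_A$ is a Ces\`aro limit of $\int_A u_k \dd\mu$, but nothing more: the heuristic $u_k \approx hg$ has no pointwise content. From $a_k = (h+o(1))S_k g$ and $a_{k-1}\circ T = (h+o(1))(S_k g - g)$ one only gets $u_k = hg + o(S_k g)$, and $o(S_k g)$ is unbounded relative to $g$; increments of a subadditive sequence can oscillate arbitrarily (only their Ces\`aro averages are constrained), so neither $\liminf_k u_k \ge hg$ nor $\limsup_k u_k \le hg$ holds in general. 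Worse, the Fatou argument you invoke runs the wrong way: the upper domination $u_k \le a_1^+$ yields $\limsup_k \int_A u_k \dd\mu \le \int_A \limsup_k u_k \dd\mu$, i.e.\ an \emph{upper} bound on $L_A$, which is the easy direction; a \emph{lower} bound on $L_A$ would require an integrable lower dominant, which is precisely what subadditivity refuses to provide. (In the probability case this direction is rescued by reverse Fatou applied to $a_n/n$ itself, dominated by $\frac1n S_n a_1^+$, which converges in $\Lbb^1$ by Birkhoff; in the $\sigma$-finite ratio setting there is no such normalization, and this is exactly where a new idea is needed.) The paper's idea, absent from your sketch, is: after reducing to nonnegative superadditive sequences, fix $k$, truncate $a_k/k$ to the \emph{integrable} function $f_{k,M} := \min\{a_k, M S_k g, Mk\}/k$, use superadditivity to get $a_n \ge S_{n-2k} f_{k,M}$, and apply the already-proved Hopf theorem to the pair $(f_{k,M}, g)$; this converts the hard direction into an exact integral identity for Birkhoff sums, and monotone convergence in $M$ and $k$ finishes it. Nothing in your proposal plays this role, so the proof as sketched cannot be completed.

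Your stopping-time paragraph also has two real, though more repairable, gaps. First, passing from the good-block sum to $(\underh+\eps)S_n g$ multiplies the inequality $\sum_{\text{good blocks}} (S_\tau g)\circ T^j \le S_n g$ by $\underh+\eps$, which is legitimate only where $\underh + \eps \ge 0$; since the $a_n$ may be very negative, $\underh$ can be negative or $-\infty$, so you must truncate (replace $\underh$ by $\max(\underh,-M)$) and carry the extra error $|\underh+\eps|\sum_{\text{bad}} g\circ T^j$. Second, your pointwise conclusion $\barh \le \underh$ requires $E_n/S_n g \to 0$ almost everywhere: the $\{\tau > N\}$ part is fine (apply Hopf to $a_1^+\bfone_{\{\tau>N\}}$ and let $N \to \infty$), but the last incomplete block contributes $\sum_{j=n-N}^{n-1} a_1^+ \circ T^j$, and the needed fact $a_1^+\circ T^n = o(S_n g)$ a.e.\ is not obvious in a $\sigma$-finite space; it is true but requires its own argument (e.g.\ by inducing on a finite-measure set). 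The paper sidesteps both issues by never dividing pointwise: it proves the two integral inequalities for every invariant $A$, and then recovers $\barh = \underh$ a.e.\ from the equality $\int_A \barh g \dd\mu = \int_A \underh g \dd\mu$ applied to the invariant sets $A = \{\underh \le M\}$. You could graft that ending onto your argument, but only after supplying the missing hard direction above.
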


For a similar version of this theorem, but under an 
additional uniform integrability condition on $a_n$, 
cf.~\cite{AkcogluSucheston:1978}. 
To our knowledge the theorem is new in the stated generality.

\smallskip

The remainder of this article is organized as follows. In Section~\ref{sec:MainLemmas} 
we prove some classical lemmas about conservative dynamical systems, and the main 
lemma (Lemma~\ref{lem:Principal}, which slightly generalizes the main theorem of~\cite{KatznelsonWeiss:1982}). 
In Section~\ref{sec:Hopf} we prove Hopf's ratio ergodic theorem, and in Section~\ref{sec:Kingman} 
the above Kingman ratio ergodic theorem
(whose proof uses Hopf's ergodic theorem).

\section{Main lemmas}
\label{sec:MainLemmas}

First, and so that our proofs will be essentially self-contained, 
let us state and prove some consequences of conservativity. 
For details the reader may consult e.g. \cite[Chap 1]{Aaronson:1997}.
Conservativity is an a priori mild recurrence condition which
is equivalent to 
the following seemingly stronger recurrence condition. 
Let $(X, \calB, \mu, T)$ be a conservative, measure-preserving 
dynamical system. Then, given any measurable subset $A$, 
almost everywhere on $A$,
\begin{equation}
 \sum_{n\geq 0} \bfone_A \circ T^n
 = +\infty.
\end{equation}
In other words, almost every point in $A$ returns infinitely often to $A$.
A consequence is the following:
\begin{lemma}
\label{lem:Conservatif}

 Let $(X,\calB,\mu,T)$ be a measure-preserving and conservative transformation. Let $f:X \to \R_+$ be measurable. 
 Then, $\mu$-almost everywhere on $\{f >0\}$:
 \begin{equation}
  \label{eq:conserv}
  \lim_{n \to +\infty} S_n f 
  = +\infty.
 \end{equation}
\end{lemma}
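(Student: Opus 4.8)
The plan is to reduce the statement to the infinite-recurrence property displayed just above the lemma, via a decomposition of $\{f>0\}$ into super-level sets. Since $f\geq 0$, the Birkhoff sums $S_n f$ are nondecreasing in $n$, so the limit $\lim_{n\to\infty} S_n f$ automatically exists in $[0,+\infty]$; the only content is to show that it equals $+\infty$ at $\mu$-almost every point of $\{f>0\}$.

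First I would write $\{f>0\} = \bigcup_{k\geq 1} A_k$, where $A_k := \{f > 1/k\}$. As this is a countable union, it suffices to prove that $\lim_n S_n f = +\infty$ $\mu$-almost everywhere on each $A_k$ separately, and then take the union over $k$.

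Next, fix $k$ and apply the infinite-recurrence reformulation of conservativity to the set $A_k$: for $\mu$-almost every $x\in A_k$ one has $\sum_{n\geq 0} \bfone_{A_k}\circ T^n (x)= +\infty$, i.e. the orbit of $x$ returns to $A_k$ infinitely often. The elementary observation driving the argument is the pointwise bound $f \geq \tfrac1k \bfone_{A_k}$, which yields $f\circ T^j \geq \tfrac1k \bfone_{A_k}\circ T^j$ for every $j$, and hence
\begin{equation*}
 S_n f = \sum_{j=0}^{n-1} f\circ T^j \geq \frac1k \sum_{j=0}^{n-1} \bfone_{A_k}\circ T^j .
\end{equation*}
Letting $n\to\infty$, the right-hand side diverges to $+\infty$ at $\mu$-almost every point of $A_k$, forcing $\lim_n S_n f = +\infty$ there as well.

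I do not expect a genuine obstacle. Once the infinite-recurrence form of conservativity is granted, the only ingredients are the monotonicity of $S_n f$, the super-level decomposition, and the trivial lower bound $f \geq \tfrac1k\bfone_{A_k}$. The sole point deserving a word of care is the passage from a conclusion holding on each $A_k$ to the conclusion on $\{f>0\}$, which is harmless because a countable union of $\mu$-null exceptional sets remains $\mu$-null.
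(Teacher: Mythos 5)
Your proof is correct and follows essentially the same route as the paper's: both decompose $\{f>0\}$ into the countable union of level sets $A_k$, invoke the infinite-recurrence consequence of conservativity on each $A_k$, and conclude from the pointwise bound $f \geq \tfrac{1}{k}\bfone_{A_k}$ that $S_n f \to +\infty$ almost everywhere on $A_k$, finishing with a countable union of null exceptional sets.
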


\begin{proof}

For $n \geq 1$, let $A_n := \{f \geq \frac{1}{n}\}$. 
Let $\Omega_n := A_n \cap \bigcap_{m \geq 0} \bigcup_{k \leq m} T^{-k} A_n$ 
be the set of points of $A_n$ which return to $A_n$ infinitely many times.

\smallskip

Since $f$ is positive and takes value at least $1/n$ on $A_n$, Equation~\eqref{eq:conserv} 
follows for all $x \in \Omega_n$. Let $\Omega_\infty := \bigcup_{n\geq 1} \Omega_n$. Then 
Equation~\eqref{eq:conserv} holds on $\Omega_\infty$.
Since $(X,\calB,\mu,T)$ is assumed to be conservative, 
$\mu (\Omega_n \Delta A_n) = 0$,
so that $\mu (\Omega_\infty \Delta \bigcup_{n \geq 1} A_n) = 0$. 
But, $\bigcup_{n \geq 1} A_n = \{f >0\}$, so $\Omega_\infty$ has full measure in $\{f >0\}$.
\end{proof}

Let $(a_n)_{n \geq 1}$ be a super-additive sequence of functions and 
$g \in \Lbb^1 (A, \mu)$. We define for every $x\in X$ the following 
lower and upper limits:
 \begin{equation}
  0 \ \ \leq  \ \
  \underh(x) = \liminf_{n\to \infty} \frac{a_n(x)}{S_n g(x)} 
  \ \ \leq \ \
  \barh(x) = \limsup_{n\to \infty} \frac{a_n(x)}{S_n g(x)} 
  \ \ \leq  \ \ +\infty.
 \end{equation}
Both $\underh$ and $\barh$ are measurable and, in fact, a.e.
 $T$-invariant:

\begin{lemma}
 \label{lem:InvarianceLimits}
 
 Let $(X,\calB,\mu,T)$ be a measure-preserving and conservative transformation. 
 Let $(a_n)_{n \geq 1}$ be a super-additive sequence of functions, with 
 $a_1 \geq 0$ a.e.. Let $g \in \Lbb^1 (A, \mu; \R_+^*)$.
 Then $\underh \circ T = \underh$ and $\barh \circ T = \barh$ almost everywhere.
\end{lemma}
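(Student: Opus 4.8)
The plan is to first show, using only super-additivity, that $\barh$ and $\underh$ are \emph{sub-invariant}, namely $\barh \circ T \le \barh$ and $\underh \circ T \le \underh$ almost everywhere, and then to upgrade these one-sided inequalities to genuine invariance by invoking conservativity.

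For the sub-invariance step I would use the single super-additivity estimate obtained by peeling off the first coordinate: splitting $k+1 = 1 + k$ gives $a_{k+1} \ge a_1 + a_k \circ T$ almost everywhere, whence, since $a_1 \ge 0$,
\begin{equation}
  a_k(Tx) \le a_{k+1}(x) - a_1(x) \le a_{k+1}(x).
\end{equation}
Combining this with the identity $S_k g(Tx) = S_{k+1} g(x) - g(x)$ yields
\begin{equation}
  \frac{a_k(Tx)}{S_k g(Tx)} \le \frac{a_{k+1}(x)}{S_{k+1} g(x) - g(x)} = \frac{a_{k+1}(x)}{S_{k+1} g(x)} \cdot \frac{1}{1 - g(x)/S_{k+1} g(x)}.
\end{equation}
Since $g > 0$ almost everywhere, Lemma~\ref{lem:Conservatif} gives $S_n g \to +\infty$ almost everywhere, so the last factor tends to $1$. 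As all quantities are non-negative, taking $\limsup$ (respectively $\liminf$) over $k$ and reindexing $m = k+1$ produces $\barh(Tx) \le \barh(x)$ and $\underh(Tx) \le \underh(x)$ almost everywhere. Here I would invoke the elementary fact that for non-negative $u_k$ and $v_k \to 1$ one has $\limsup_k u_k v_k = \limsup_k u_k$ (and likewise for $\liminf$), which remains valid when the limit is $+\infty$.

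The second and main step is to turn sub-invariance into invariance, and this is where conservativity is indispensable, since super-additivity supplies no inequality in the opposite direction. I would establish the general statement that any measurable $\phi : X \to [0,+\infty]$ with $\phi \circ T \le \phi$ almost everywhere in fact satisfies $\phi \circ T = \phi$ almost everywhere, and then apply it to $\phi = \barh$ and $\phi = \underh$. For each $c \ge 0$ set $A_c := \{\phi > c\}$; sub-invariance gives $T^{-1} A_c \subseteq A_c$ up to a null set, so that the sets $W := A_c \setminus T^{-1} A_c$ and its preimages $T^{-n} W = T^{-n} A_c \setminus T^{-(n+1)} A_c$ are, modulo null sets, pairwise disjoint. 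Thus $W$ is a wandering set; by the recurrence property recalled above, almost every point of $W$ returns to $W$, which is incompatible with wandering unless $\mu(W) = 0$. Hence $\mu(A_c \Delta T^{-1} A_c) = 0$ for every $c$, and writing $\{\phi > \phi \circ T\} = \bigcup_{c \in \mathbb{Q}_{\ge 0}} (A_c \setminus T^{-1} A_c)$ as a countable union of null sets gives $\phi \circ T = \phi$ almost everywhere.

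The crux of the difficulty is precisely this asymmetry: super-additivity is one-sided, so the direct pointwise computation yields only $\barh \circ T \le \barh$, and the reverse inequality genuinely cannot be recovered by a pointwise manipulation. The essential input is therefore the conservativity-based rigidity lemma asserting that a non-negative sub-invariant function is invariant, whose proof rests on the absence of wandering sets of positive measure for a conservative measure-preserving transformation.
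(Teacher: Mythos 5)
Your proof is correct, and its first half---deriving the sub-invariance $\barh \circ T \le \barh$ and $\underh \circ T \le \underh$ from super-additivity, $a_1 \ge 0$, and $S_n g \to +\infty$ (Lemma~\ref{lem:Conservatif})---is the same computation as the paper's. Where you genuinely diverge is in the upgrade from sub-invariance to invariance, which, as you say, is the crux. The paper's trick is to compress $\underh$ through the bounded increasing map $t \mapsto t/(1+t)$, obtaining $\eta = \underh/(1+\underh)$ with values in $[0,1]$, and to apply Lemma~\ref{lem:Conservatif} to the non-negative function $\phi := \eta - \eta \circ T$: its Birkhoff sums telescope to $\eta - \eta \circ T^n \in [-1,1]$, so they cannot diverge, forcing $\mu(\{\phi>0\}) = \mu(\{\underh > \underh \circ T\}) = 0$. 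You instead prove the general rigidity statement that a non-negative measurable sub-invariant function is invariant, via level sets: $A_c := \{\phi > c\}$ satisfies $T^{-1}A_c \subseteq A_c$ modulo null sets, so $A_c \setminus T^{-1}A_c$ is wandering, hence null by conservativity, and a union over $c \in \mathbb{Q}_{\ge 0}$ concludes. Both arguments are complete and of comparable length. The paper's version is more economical within its own architecture: it reuses the very same Lemma~\ref{lem:Conservatif} already needed for the sub-invariance step, so no separate appeal to wandering sets or to the recurrence characterization is required, and it handles the value $+\infty$ with no case distinction. Your version buys a cleanly isolated, reusable general fact (the standard ``sub-invariant $\Rightarrow$ invariant'' lemma for conservative systems), at the cost of invoking the recurrence property stated at the beginning of Section~\ref{sec:MainLemmas}; note also that your argument only needs the bare definition of conservativity (a set of positive measure meets some preimage of itself), since that alone already contradicts the pairwise disjointness of the sets $T^{-n}(A_c \setminus T^{-1}A_c)$.
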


\begin{proof}

 We prove the result for $\underh$;
 the proof for $\barh$ is essentially the same. 
 Let $(a_n)$ and $g$ be as in the lemma. Then:
 \begin{equation*}
  \frac{a_{n+1}}{S_{n+1} g} 
  \geq \frac{a_1 + a_n \circ T}{g+(S_n g)\circ T} 
 \end{equation*}
 By Lemma~\ref{lem:Conservatif}, $\lim_{n \to + \infty} S_n g = +\infty$ almost everywhere,
 whence, taking the liminf,
 \begin{equation*}
  \underh 
  \geq \underh \circ T.
 \end{equation*}

The function $\eta=\uh/(1+\uh)$ 
   takes values in $[0,1]$ and we have
 $A := \{\underh > \underh \circ T\} = \{\eta > \eta\circ T\}$. 
 By Lemma~\ref{lem:Conservatif},
 the map $\phi := \eta - \eta \circ T$ verifies
 $\lim_{n \to + \infty} S_n \phi = +\infty$
 almost everywhere on $A$.
 But as $S_n \phi = \eta - \eta \circ T^{n+1}\in [-1,1]$ everywhere,
 we must have $\mu (A)=0$.

\end{proof}


We can now state and prove our main lemma.

\begin{lemma}
 \label{lem:Principal}
 
 Let $(X,\calB,\mu,T)$ be a measure-preserving and conservative transformation. 
 Let $(a_n)_{n \geq 1}$ be a super-additive sequence of functions, with 
 $a_1 \geq 0$ almost everywhere. Let $g \in \Lbb^1 (A, \mu; \R_+^*)$.
 
 \smallskip
 
 Then $\barh$ is $T$-invariant, and for all $T$-invariant $A \in \calB$,
 \begin{equation}
  \label{eq:inequality}
  \liminf_{n \to + \infty} \frac{1}{n} \int_A a_n \dd \mu 
  \geq \int_A \barh g \dd \mu.
 \end{equation}
\end{lemma}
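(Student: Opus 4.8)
The plan is to adapt the Katznelson--Weiss stopping-time argument to the super-additive, ratio setting, using the $T$-invariance of $\barh$ (already granted by Lemma~\ref{lem:InvarianceLimits}) to pull it out of the relevant sums. Throughout I fix a $T$-invariant set $A$. Note first that iterating super-additivity with $a_1 \geq 0$ gives $a_n \geq S_n a_1 \geq 0$ almost everywhere, so all ratios are nonnegative and $\barh \in [0,+\infty]$. I will establish a pointwise lower bound on $a_n$, integrate it over $A$, and then pass to the limit.

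First I would truncate. For $L > 0$ set $\barh_L := \min(\barh, L)$, which is again $T$-invariant, and fix $\eps > 0$. Since $\limsup_n a_n/S_n g = \barh$ strictly exceeds $\barh_L - \eps$ (trivially so where $\barh = 0$, as there $\barh_L - \eps < 0 \leq a_n/S_n g$), the stopping time
\begin{equation*}
 \tau(x) := \min\{\, n \geq 1 : a_n(x) \geq (\barh_L(x) - \eps)\, S_n g(x) \,\}
\end{equation*}
is finite almost everywhere. Because $g \in \Lbb^1$ and the sets $B_N := \{\tau > N\}$ decrease to a null set, I can choose $N$ so large that $\int_A \bfone_{B_N} g \dd\mu$ is as small as I please; this is the only place where integrability of $g$ is used.

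The core is a greedy decomposition of orbit segments. Given $x$ and large $n$, I scan $\{0,\dots,n-1\}$ from the left: at a position $i$, if $T^i x \notin B_N$ and $i + \tau(T^i x) \leq n$ I excise a \emph{good block} of length $\tau(T^i x)$, and otherwise I advance by a single step. Iterating super-additivity along the resulting partition $0 = i_0 < \dots < i_J = n$ gives $a_n(x) \geq \sum_j a_{i_{j+1}-i_j}(T^{i_j}x)$; on a good block the defining inequality of $\tau$ together with $T$-invariance of $\barh_L$ yields $a_{\ell}(T^{i_j}x) \geq (\barh_L(x) - \eps)\sum_{k=i_j}^{i_{j+1}-1} g(T^k x)$, while on single steps I use only $a_1 \geq 0$. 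Writing the uncovered (``bad'') indices as a remainder $R_n(x) = \sum_{\text{bad }k} g(T^k x)$ and using $\barh_L \leq L$, this produces the pointwise bound $a_n(x) \geq \barh_L(x)\, S_n g(x) - L\, R_n(x) - \eps\, S_n g(x)$.

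It then remains to integrate and control $R_n$. Every bad index is either a single step forced by $T^k x \in B_N$, or an overshoot, which can occur only in the last $N$ positions; hence $R_n \leq S_n(\bfone_{B_N} g) + \sum_{k=n-N}^{n-1} g\circ T^k$. Using $T^{-k}A = A$ and the $T$-invariance of $\barh_L$, integration turns every Birkhoff-type sum into $n$ times a single integral, giving
\begin{equation*}
 \frac 1n \int_A a_n \dd\mu \geq \int_A \barh_L g \dd\mu - L\int_A \bfone_{B_N} g \dd\mu - \frac{NL}{n}\int_A g \dd\mu - \eps \int_A g \dd\mu .
\end{equation*}
I would then send $n\to\infty$ (killing the $N/n$ term), $N\to\infty$ (killing $\int_A \bfone_{B_N}g$ by dominated convergence), $\eps \to 0$, and finally $L\to\infty$ (monotone convergence, $\barh_L g \uparrow \barh g$) to reach \eqref{eq:inequality}. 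The main obstacle is the bookkeeping of the greedy decomposition: ensuring the good/bad classification is measurable and that the bad indices split cleanly into the $B_N$-contribution and a genuine $O(N)$ boundary term, everything else being either the iterated super-additive inequality or routine invariance computations.
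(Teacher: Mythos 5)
Your proposal is correct and follows essentially the same route as the paper's proof: a stopping time at the first $n$ with $a_n \geq (\text{truncated }\barh)\, S_n g$, finiteness almost everywhere from the definition of the limsup, a block decomposition of the orbit combined with iterated super-additivity, integration over the $T$-invariant set, and then limits in $n$, the cutoff, $\eps$ and $L$. The only notable difference is bookkeeping: the paper zeroes the target on the bad set (working with $\ohe g \bfone_E$, where $\ohe = \barh/(1+\eps\barh)$ and $E$ is the set where the stopping time is short), so bad blocks contribute nothing and the conclusion follows by monotone convergence alone, whereas you keep the target $(\min(\barh,L)-\eps)g$ on all blocks and pay penalty terms $L\int_A \bfone_{B_N} g \dd \mu + \eps \int_A g \dd \mu$, which you then remove by dominated convergence; both devices are valid, and your greedy scan with the overshoot confined to the last $N$ indices plays exactly the role of the paper's passage from $a_{\tau_k}$ to $a_{N+L}$.
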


\begin{proof}

 By Lemma~\ref{lem:InvarianceLimits}, $\barh$ is $T$-invariant, but
 with values a priori in $[0,+\infty]$. For $\eps>0$, we set 
 \begin{equation*}
  \ohe 
  := \frac{\barh}{1 + \varepsilon \barh},
 \end{equation*}
 with the convention that $\ohe (x)=1/\eps$ when $\barh(x)=+\infty$. Let:
 \begin{equation*}
  n_\eps
  := \inf \left\{ \; k\geq 1: \; a_k \;\geq \;  S_k (\ohe g) \; \right\}.
 \end{equation*}
 Then $n_\eps(x)=1$ whenever $\barh(x)=0$.
 When $\barh(x)>0$, we have $\ohe (x)<\barh(x)$, so $n_\eps (x)$ is finite by the definition of the limsup.
 Introduce a time-cutoff $L\geq 1$ and denote 
 $E = \EeL := \{n_\eps \leq L \}$. We then set: 
 \begin{equation*}
  \varphi(x) 
  = \varphi_{\eps,L}(x) 
  := \left\{ \begin{array} {cc}
    1 & \text{ if }  x\notin E \\
    n_\eps(x) & \text{ if } x \in E
  \end{array} \right. .
 \end{equation*}
 
 One verifies that for every $x\in X$:
 \begin{equation*}
  a_{\varphi(x)} 
  \geq S_{\varphi(x)} (g \ohe \bfone_{E}).
 \end{equation*}
 When $x\in E$ this is true by the very definition of $n_\eps(x)$, 
 while for $x\notin E$ it holds because the right hand side vanishes.
 
 \smallskip
 
 Define a sequence of stopping times:
 \begin{equation*}
  \left\{ \begin{array}{ll}
    \tau_0(x) & =0, \\
    \tau_{k+1}(x)& = \tau_k(x) + \varphi \left( T^{\tau_k(x)} x\right), \quad k\geq 0. 
  \end{array} \right. .
 \end{equation*}
 Note that  $1 \leq \tau_{k+1}-\tau_k \leq L$ for every $k\geq 0$, and for all $x \in X$:
 \begin{equation*}
  a_{\tau_k(x)} (x) 
   \geq \sum_{j=0}^{k-1} a_{\varphi (x)} \circ T^{\tau_j}(x) 
   \geq \sum_{j=0}^{k-1} S_{\varphi (x)} (\ohe g\bfone_E) \circ T^{\tau_j}(x)
   = S_{\tau_k(x)} (g \ohe  \bfone_E) (x).
 \end{equation*}
 
 Let $N \geq 1$ and $x\in X$. There exists $k\geq 1$ such that
 $N < \tau_k(x) \leq N+L$. Then:
 \begin{equation*}
  a_{N+L} (x) 
  \geq a_{\tau_k(x)} (x) + \sum_{i = \tau_k (x)+1}^{N+L} a_1 \circ T^i
  \geq S_{\tau_k(x)} (\ohe g \bfone_E) (x)
  \geq S_N (g\ohe \bfone_E) (x),
 \end{equation*}
 which allows us to get rid of the intermediate stopping times.
 Take now a $T$-invariant set $A\in \calB$ and integrate the 
 above inequality over $A$. By $T$-invariance:
 \begin{equation*}
  \frac{1}{N+L} \int_A a_{N+L} \dd \mu 
  \geq \frac{1}{N+L} \int_A S_N(\ohe g \bfone_E) \dd \mu
  = \frac{N}{N+L} \int_A \ohe g \bfone_E \dd \mu.
 \end{equation*}
 Letting $N \to +\infty$, we conclude that:
 \begin{equation*}
  \liminf_{n \to + \infty} \frac{1}{n} \int_A a_n \dd \mu 
  \geq \int_A \ohe g \bfone_E \dd \mu.
 \end{equation*}
 Letting $L \to+\infty$ and finally $\eps\to 0$, we obtain by monotone convergence:
 \begin{equation*}
  \liminf_{n \to + \infty} \frac{1}{n} \int_A a_n \dd \mu 
  \geq \int_A \barh g \dd \mu.
 \end{equation*}
\end{proof}

\section{Proof of Hopf's Ratio ergodic theorem}
\label{sec:Hopf}

We are now ready to prove to prove Hopf's ergodic theorem. Lemma~\ref{lem:Principal} 
only provides a upper bound on $\barh$. In most proofs using these techniques, 
the lower bound on $\underh$ follows by repeating the same argument, with a modified 
stopping time (and some handwaving). 
Here we notice that the symmetry of Hopf's ratio 
ergodic theorem provides us with a shortcut:

\begin{proof}[Proof of Theorem~\ref{thm:Hopf}]

Let $f$, $g \in \Lbb^1 (X, \mu; \R_+^*)$. By Lemma~\ref{lem:Principal}, with $a_n = S_n f$, 
for any $T$-invariant measurable $A$:
\begin{equation*}
 \int_A f \dd \mu 
 \geq \int_A \barh g \dd \mu.
\end{equation*}
In particular, taking $A = X$, we see that $\barh g \in \Lbb^1 (X, \mu)$. 

\smallskip

We now use the symmetry, and apply Lemma~\ref{lem:Principal} with $g$ and $f$.
Since $$\limsup_{n \to + \infty} (S_n g)/(S_n f) = \underh^{-1},$$
we get $\underh^{-1} f \in \Lbb^1 (X, \mu)$, and in particular $+ \infty > \barh \geq \underh >0$ almost everywhere.

\smallskip

We now apply Lemma~\ref{lem:Principal} again, with $\barh g$ and $f$.
Since 
\begin{equation*}
 \limsup_{n \to + \infty} \frac{S_n (\barh g)}{S_n f} 
 = \frac{\barh}{\underh},
\end{equation*}
we get that, for any $T$-invariant measurable $A$,
\begin{equation*}
 \int_A f \dd \mu 
 \geq \int_A \barh g \dd \mu 
 \geq \int_A f \frac{\barh}{\underh} \dd \mu.
\end{equation*}
As $f>0$ a.e.\ and the
integral is finite we conclude that $\barh = \underh =: h$ almost everywhere.

\smallskip

Let us now turn towards the proof of Theorem~\ref{thm:Hopf}
without positivity assumption on $f$. 
Since $\mu$ is $\sigma$-finite, we can write $f= f_+ - f_-$, 
with $f_+$ and $f_-$ in $\Lbb^1 (X, \mu; \R_+^*)$. Then, $\mu$-almost everywhere:
\begin{equation*}
 \lim_{n \to +\infty} \frac{S_n f}{S_n g} 
 = \lim_{n \to \infty} \frac{S_n f_+}{S_n g} -\frac{S_n f_-}{S_n g}
 = h_+ - h_- 
 =: h .
\end{equation*}
In addition, $\int_A f \dd \mu = \int_A (f_+-f_-) \dd \mu = \int_A (h_+-h_-)g \dd \mu = \int_A h g \dd \mu$.
\end{proof}

\begin{remark}
 
 The proof of Theorem~\ref{thm:Hopf} itself can be significantly shortened if one assumes 
 that $(X,\calB,\mu,T)$ is ergodic: since $\barh$ and $\underh$ are then constant, applying 
 Lemma~\ref{lem:Principal} to the pairs $(f,g)$ and $(g,f)$ yields directly:
 \begin{equation*}
  \underh 
  \geq \frac{\int_X f \dd \mu}{\int_X g \dd \mu} 
  \geq \barh.
 \end{equation*}
 
 \smallskip
 
 There is, to our knowledge, not much gain to be had in the proof of Lemma~\ref{lem:Principal}. 
\end{remark}

In the ergodic case, the statement of Hopf's ergodic theorem can be simplified.

\begin{corollary}[Hopf's theorem, ergodic version]
\label{cor:HopfErgodic}

 Let $(X,\calB,\mu,T)$ be a measure-preserving, conservative and ergodic transformation. 
 Let $f$, $g \in \Lbb^1 (A, \mu)$ with $\int_X g \dd \mu \neq 0$. 
 Then $\mu$-almost everywhere:
 \begin{equation}
  \lim_{n\to +\infty} \frac{S_n f}{S_n g}
  = \frac{\int_X f \dd \mu}{\int_X g \dd \mu}.
 \end{equation}
\end{corollary}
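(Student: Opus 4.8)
The plan is to deduce the corollary from Hopf's ratio ergodic theorem (Theorem~\ref{thm:Hopf}) together with the elementary fact that, under ergodicity, every $T$-invariant measurable function is constant $\mu$-almost everywhere. I would treat first the case where $g > 0$ almost everywhere, and then reduce the general case to it.

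First, assume $g > 0$ almost everywhere, so that $\int_X g \dd \mu > 0$. By Theorem~\ref{thm:Hopf} the limit $h = \lim_n (S_n f)/(S_n g)$ exists, is finite, and is $T$-invariant. Since each level set $\{h > c\}$ is then $T$-invariant, ergodicity forces it to have measure $0$ or full measure; taking $c^\ast = \inf\{ c \in \mathbb{Q} : \mu(\{h > c\}) = 0\}$, which is finite because $h$ is finite almost everywhere, one gets $h = c^\ast$ almost everywhere. Taking $A = X$ in the identity $\int_A f \dd \mu = \int_A h g \dd \mu$ of Theorem~\ref{thm:Hopf} yields $\int_X f \dd \mu = c^\ast \int_X g \dd \mu$, and dividing by the nonzero quantity $\int_X g \dd \mu$ identifies $c^\ast$ with the announced ratio.

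For the general case, where $g$ need not have a sign but $\int_X g \dd \mu \neq 0$, I would introduce an auxiliary strictly positive integrable weight. By $\sigma$-finiteness of $\mu$ there exists $\rho \in \Lbb^1(X,\mu)$ with $\rho > 0$ almost everywhere, so that $0 < \int_X \rho \dd \mu < \infty$. Applying the positive case just established to the pairs $(f,\rho)$ and $(g,\rho)$ gives, almost everywhere,
\begin{equation*}
 \frac{S_n f}{S_n \rho} \longrightarrow \frac{\int_X f \dd \mu}{\int_X \rho \dd \mu}, \qquad \frac{S_n g}{S_n \rho} \longrightarrow \frac{\int_X g \dd \mu}{\int_X \rho \dd \mu}.
\end{equation*}
Writing $(S_n f)/(S_n g) = (S_n f / S_n \rho)/(S_n g / S_n \rho)$ and noting that the denominator converges to the nonzero limit $\int_X g \dd \mu / \int_X \rho \dd \mu$, the quotient converges almost everywhere to $\int_X f \dd \mu / \int_X g \dd \mu$, as claimed.

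The step to watch is this last reduction: because $g$ may change sign, $S_n g$ can vanish or change sign for small $n$, so that $(S_n f)/(S_n g)$ need not be defined for every $n$. This causes no genuine difficulty, however, since $(S_n g)/(S_n \rho)$ converges to a nonzero constant, so that $S_n g$ is eventually nonzero and of constant sign almost everywhere; the ratio is thus eventually well-defined and its limit is the quotient of the two limits above.
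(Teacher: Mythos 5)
Your proof is correct, and it reaches the corollary by a genuinely different reduction than the paper's, although both rest on the same two ingredients: Theorem~\ref{thm:Hopf} and the fact that, under ergodicity, $T$-invariant functions are almost everywhere constant. The common obstacle is that $g$ may change sign, so Hopf's theorem cannot be applied with $g$ as the denominator. The paper handles this by writing $f = f_+ - f_-$ with $f_\pm \in \Lbb^1(X,\mu;\R_+^*)$ (the same $\sigma$-finiteness trick you use to produce $\rho$) and applying Theorem~\ref{thm:Hopf} ``upside down'' to the pairs $(g,f_\pm)$: the limits $k_\pm$ of $S_n g/S_n f_\pm$ are constant by ergodicity and nonzero because $\int_X g \dd\mu = k_\pm \int_X f_\pm \dd\mu \neq 0$, after which $S_n f/S_n g = S_n f_+/S_n g - S_n f_-/S_n g \to 1/k_+ - 1/k_-$. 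You instead fix a single strictly positive integrable reference $\rho$, first establish the ergodic statement when the denominator is positive (Hopf plus the rational-infimum argument identifying the constant), and then divide the two limits $S_n f/S_n\rho$ and $S_n g/S_n\rho$. What each buys: your route is more modular and treats $f$ and $g$ symmetrically through the common reference $\rho$, at the cost of one intermediate statement; the paper's recycles the strictly positive parts of $f$ itself as the reference, so it is marginally more economical but less transparent about why the sign problem disappears. A small point in your favour: you make explicit that $S_n g$ is eventually nonzero almost everywhere, so that the ratio $S_n f/S_n g$ is eventually well-defined --- a point the paper leaves implicit when it inverts $S_n g/S_n f_\pm$ into $S_n f_\pm/S_n g$.
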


\begin{proof}

We decompose as above $f=f_+-f_-$, with $f_\pm$ integrable and positive.
By the Theorem~\ref{thm:Hopf}, $\mu$-almost everywhere,
\begin{equation*}
 \lim_{n \to + \infty} \frac{S_n g}{S_n f_\pm} 
 = k_\pm.
\end{equation*}
By ergodicity, the $k_\pm$ are constant and then non-zero,
since $\int_X g \dd \mu = k_\pm \int_X f_\pm \dd \mu \neq 0$. Thus, almost everywhere:
\begin{equation*}
 \lim_{n\to \infty} \frac{S_n f}{S_n g} 
 = \lim_{n\to \infty} \frac{S_n f_+}{S_n g}-\frac{S_n f_-}{S_n g}
 = \frac{1}{k_+} - \frac{1}{k_-} 
 = \frac{\int_X (f_+ - f_-) \dd \mu}{\int_X g \dd \mu} 
 = \frac{\int_X f \dd \mu}{\int_X g \dd \mu} . \qedhere
\end{equation*}
\end{proof}

As a special case, we may also consider when
$\mu$ is a probability measure and $g\equiv 1$ (thus integrable). 
$T$ is automatically conservative by Poincar\'e recurrence theorem. 
From Theorem~\ref{thm:Hopf} we deduce:

\begin{corollary}[Birkhoff's Ergodic Theorem]
\label{cor:Birkhoff}

 Let $(X,\calB,\mu,T)$ be a measure preserving
 transformation on a probability space. Let $f\in \Lbb^1(X,\mu)$.
 Then the following limit exists $\mu$-almost everywhere:
 \begin{equation}
  f^*
  := \lim_{n\to +\infty} \frac{1}{n} S_n f.
 \end{equation}
 $f^*$ is $T$-invariant (up to a set of measure $0$), and for any $T$-invariant measurable subset $A$:
 \begin{equation}
  \int_A f \dd \mu 
  = \int_A f^* \dd \mu.
 \end{equation}
\end{corollary}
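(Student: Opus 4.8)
The plan is to obtain Birkhoff's theorem as an immediate specialization of Hopf's ratio ergodic theorem (Theorem~\ref{thm:Hopf}), choosing the comparison function $g \equiv 1$. The only preliminary point to verify is that the standing hypotheses of Theorem~\ref{thm:Hopf} are met. Since $\mu$ is a probability measure, it is in particular $\sigma$-finite, and $T$ is automatically conservative by the Poincar\'e recurrence theorem (as already noted just above the statement). Thus $(X,\calB,\mu,T)$ is a conservative, measure-preserving transformation on a $\sigma$-finite measure space, and Theorem~\ref{thm:Hopf} applies.

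Next I would set $g \equiv 1$. Then $g > 0$ everywhere, and $g \in \Lbb^1(X,\mu)$ precisely because $\mu(X) = 1 < +\infty$; this is the step that uses finiteness of $\mu$ in an essential way and would fail on a general $\sigma$-finite space. Since $S_n g = \sum_{k=0}^{n-1} 1 = n$, Theorem~\ref{thm:Hopf} yields that the limit
\begin{equation*}
 f^* = \lim_{n \to +\infty} \frac{S_n f}{S_n g} = \lim_{n \to +\infty} \frac{1}{n} S_n f
\end{equation*}
exists $\mu$-almost everywhere, is finite $\mu$-almost everywhere, and is $T$-invariant up to a null set.

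Finally, for the integral identity I would invoke the last conclusion of Theorem~\ref{thm:Hopf}: for any $T$-invariant measurable $A$,
\begin{equation*}
 \int_A f \dd \mu = \int_A f^* g \dd \mu = \int_A f^* \dd \mu,
\end{equation*}
the last equality again using $g \equiv 1$. This completes the argument. I do not expect any genuine obstacle here: the entire content is packaged in Theorem~\ref{thm:Hopf}, and the only points that require care are recording why the hypotheses transfer, namely conservativity via Poincar\'e recurrence and the integrability of the constant function $1$, both of which rest on $\mu$ being finite.
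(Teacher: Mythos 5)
Your proof is correct and is exactly the paper's argument: the paper likewise deduces the corollary from Theorem~\ref{thm:Hopf} by taking $g \equiv 1$, noting that $g$ is integrable because $\mu$ is a probability measure and that $T$ is conservative by the Poincar\'e recurrence theorem. Nothing is missing.
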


\section{Kingman, $\sigma$-finite version}
\label{sec:Kingman}

We proceed here with a ratio version of Kingman's theorem for
non-negative super-additive sequences, from which Theorem~\ref{thm:Kingman} 
shall follow easily.

\begin{proposition}
 \label{prop:SuperAdditif}
 
 Let $(X,\calB,\mu,T)$ be a measure-preserving and conservative transformation. 
 Let $(a_n)_{n \geq 1}$ be a super-additive sequence of functions, with 
 $a_1 \geq 0$ almost everywhere. Let $g \in \Lbb^1 (A, \mu; \R_+^*)$.
 
 \smallskip
 
 Then the following limit exists $\mu$-almost everywhere:
 \begin{equation*}
  h 
  := \lim_{n \to + \infty} \frac{a_n}{S_n g}
  \in [0, + \infty].
 \end{equation*}
 In addition, for any $T$-invariant measurable set $A$,
 \begin{equation*}
  \sup_{n \in \N} \frac{1}{n} \int_A a_n \dd \mu 
  = \lim_{n \to + \infty} \frac{1}{n} \int_A a_n \dd \mu
  = \int_A h g \dd \mu 
  \in [0,+\infty].
 \end{equation*}
\end{proposition}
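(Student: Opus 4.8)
The plan is to control the quantity $\gamma_A := \lim_{n}\frac1n\int_A a_n\dd\mu$ by two matching integral bounds. First I would record the soft preliminaries: super-additivity gives $a_n\ge S_n a_1\ge 0$, so all terms are non-negative, and integrating the super-additivity relation over a $T$-invariant $A$ shows that $n\mapsto\int_A a_n\dd\mu$ is a super-additive numerical sequence, whence by Fekete's lemma $\gamma_A=\sup_n\frac1n\int_A a_n\dd\mu\in[0,+\infty]$ exists. Lemma~\ref{lem:Principal} then immediately yields the lower bound $\gamma_A\ge\int_A\barh g\dd\mu$. The whole game is to prove the reverse estimate $\gamma_A\le\int_A\underh g\dd\mu$: combined with the trivial $\underh\le\barh$ this sandwiches $\int_A\barh g\le\gamma_A\le\int_A\underh g\le\int_A\barh g$, collapsing all inequalities and forcing $\underh=\barh$.

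For the upper bound I would pass to the power $T^m$. Fix $m\ge1$; iterating super-additivity gives, for $N=qm+r$ with $0\le r<m$, the block bound $a_N\ge\sum_{i=0}^{q-1}a_m\circ T^{im}$. The transformation $T^m$ is again conservative and measure-preserving, and $S_m g=\sum_{l=0}^{m-1}g\circ T^l$ is positive and integrable, so Hopf's theorem (Theorem~\ref{thm:Hopf}) applies to $(X,\calB,\mu,T^m)$. Applying it to the truncation $f_M:=\min(a_m,M\,S_m g)$ against the reference $S_m g$, and using the telescoping identity $\sum_{i=0}^{q-1}(S_m g)\circ T^{im}=S_{qm}g$, I obtain a Hopf ratio $H_m^{(M)}=\lim_q (\sum_{i<q}f_M\circ T^{im})/S_{qm}g$ together with the integral identity $\int_A f_M\dd\mu=\int_A H_m^{(M)}\,S_m g\dd\mu$ for $T$-invariant $A$ (which are \emph{a fortiori} $T^m$-invariant). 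Since $f_M\le a_m$, dividing the block bound by $S_N g$ and using $S_{qm}g/S_N g\to 1$ gives $\underh\ge H_m^{(M)}$ pointwise. Hence $\int_A f_M\dd\mu\le\int_A\underh\,S_m g\dd\mu=m\int_A\underh g\dd\mu$, the last equality because $\underh$ is $T$-invariant; letting $M\to\infty$ by monotone convergence and taking the supremum over $m$ yields $\gamma_A\le\int_A\underh g\dd\mu$.

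Finally I would extract the pointwise statement. The sandwich gives $\int_A\barh g=\int_A\underh g$ for every $T$-invariant $A$; testing against the invariant sets $\{\barh\le k\}$ (on which $\barh g$ is integrable, since $g\in\Lbb^1(X,\mu)$ forces $\int_{\{\barh\le k\}}\barh g\le k\|g\|_1<\infty$) gives $\barh=\underh$ a.e.\ on $\{\barh<\infty\}$, while testing against $\{\barh=+\infty,\ \underh\le k\}$ shows that set is null, so $\underh=\barh$ a.e.\ on $\{\barh=+\infty\}$ as well. Writing $h:=\barh=\underh\in[0,+\infty]$ gives the a.e.\ convergence, and $\gamma_A=\int_A hg\dd\mu$ together with Fekete's lemma gives $\sup_n\frac1n\int_A a_n=\lim_n\frac1n\int_A a_n=\int_A hg\dd\mu$.

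I expect the reverse inequality to be the crux. The difficulty is structural: super-additivity chains only into \emph{lower} bounds on $a_N$ (this is exactly what powers Lemma~\ref{lem:Principal}), so there is no direct stopping-time argument bounding the $\limsup$ from above. The device that breaks the deadlock is to read the block lower bound as a Birkhoff sum for $T^m$ and to invoke the \emph{already proven} Hopf theorem, which converts it into the needed upper bound on the Ces\`aro averages. The two supporting points requiring care are the conservativity of $T^m$ (a standard consequence of conservativity of $T$, via a pigeonhole argument on return times modulo $m$) and the fact that $S_{qm}g/S_N g\to 1$, i.e.\ $g\circ T^n/S_n g\to 0$ a.e.; the latter follows by applying Hopf's theorem to $g\circ T$ against $g$, which forces $S_n(g\circ T)/S_n g\to 1$.
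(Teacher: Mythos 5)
Your proof is correct and follows the same global architecture as the paper's (lower bound from Lemma~\ref{lem:Principal}, upper bound by feeding truncations of $a_m$ into Hopf's theorem, then the sandwich and the localization on invariant sets), but the crucial step is implemented by a genuinely different device. The paper never leaves the transformation $T$: it averages the block decomposition over the $k$ possible phases, getting $a_n\ge S_{(q-1)k}(a_k/k)\ge S_{n-2k}f_{k,M}$ with the triple truncation $f_{k,M}=\min\{a_k,\,M S_k g,\,Mk\}/k$, so that Hopf's theorem for $T$ itself applies to $(f_{k,M},g)$, and the boundedness $f_{k,M}\le M$ makes the index shift $S_{n-2k}$ versus $S_n$ harmless since $S_n g\to+\infty$ (Lemma~\ref{lem:Conservatif}). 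You instead read the block bound as a Birkhoff sum for $T^m$ and apply Hopf to $(X,\calB,\mu,T^m)$ with reference function $S_m g$. This is viable, but it purchases two obligations that the paper's phase-averaging avoids: (i) conservativity of $T^m$, which in infinite measure is a true but non-immediate fact, and your pigeonhole sketch is incomplete as stated --- two return times $n_1<n_2$ of $x$ to a wandering set $W$ with $n_1\equiv n_2 \pmod m$ only show that $T^{n_1}x$, not $x$ itself, returns to $W$ under $T^m$; to conclude one must, e.g., apply this observation to the subset of $W$ of points that never return under $T^m$ and derive a contradiction, or run a skew-product argument over $\Z/m\Z$ with the induced map; (ii) the fact $S_{qm}g/S_N g\to 1$, where your appeal to Hopf for $(g\circ T, g)$ is right but still requires identifying the limit as $1$ (via $\int_A g\circ T\dd\mu=\int_A g\dd\mu$ on invariant sets, $T$-invariance of the limit, and $g>0$ a.e.). Both supporting facts are standard and you explicitly flag them, so I consider the proof sound; what the paper's trick buys is self-containedness --- it needs nothing about powers of $T$ --- while your route is the more familiar Kingman-via-blocks argument at the cost of importing a lemma whose proof is of comparable depth to those the paper proves from scratch. (Your endgame differs only cosmetically: you localize on $\{\barh\le k\}$ and $\{\barh=+\infty,\ \underh\le k\}$, where the paper localizes on $\{\underh\le M\}$, which handles the infinite part for free.)
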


\begin{proof}

Let $A$ be any $T$-invariant measurable set. 
By Lemma~\ref{lem:Principal}, we know that:
\begin{equation*}
 \liminf_{n \to + \infty} \frac{1}{n} \int_A a_n \dd \mu
 \geq \int_A \barh g \dd \mu.
\end{equation*}
We want to prove the converse inequality (inverting the direction of the inequality, 
and the $\liminf$ and $\limsup$). Let $K < \sup_{n \geq 1} \frac{1}{n} \int_A a_n \dd \mu$.
Then we can find $k \geq 1$ such that $\frac{1}{k} \int_A a_k \dd \mu > K$. For $M >0$, 
let $f_{k, M} := \min\{a_k, MS_k g, Mk\}/k$. By the monotone convergence theorem, 
there exists $M > 0$ such that:
\begin{equation*}
 \int_A f_{k, M} \dd \mu 
 > K.
\end{equation*}
Let $n \geq 2k$, and let $q$, $r$ be such that $n = qk+r$ and $0 \leq r < k$. Then:
\begin{align*}
a_n 
& = \sum_{i=0}^{k-1} \frac{a_n}{k} 
\geq \sum_{i=0}^{k-1} \frac{1}{k} \left(a_i + \sum_{j=0}^{q-2} a_k \circ T^{i+jk} + a_{n-i-(q-1)k} \circ T^{i+(q-1)k} \right) \\
& \geq \sum_{i=0}^{(q-1)k-1} \frac{a_k}{k} \circ T^i 
= S_{(q-1)k} (a_k / k) 
\geq S_{n-2k} f_{k, M}.
\end{align*}
Let $\underh_{k,M} := \liminf_{n \to + \infty} S_n f_{k, M} / S_n g$. Note that 
$|S_n f_{k,M} - S_{n-2k} f_{k, M}| \leq 2kM$, so that $\underh_{k,M} = \liminf_{n \to + \infty} S_{n-2k} f_{k, M} / S_n g$ 
by Lemma~\ref{lem:Conservatif}. Hence, $\underh \geq \underh_{k,M}$. By Hopf's theorem (cf.\ Theorem~\ref{thm:Hopf}), 
\begin{equation*}
 K 
 \leq \int_A f_{k,M} \dd \mu
 = \int_A \underh_{k,M} g \dd \mu 
 \leq \int_A \underh g \dd \mu.
\end{equation*}
Since this is true for all $K < \sup_{n \geq 1} \frac{1}{n} \int_A a_n \dd \mu$, we finally get:
\begin{equation*}
 \limsup_{n \to + \infty} \frac{1}{n} \int_A a_n \dd \mu 
 \leq \sup_{n \geq 1} \frac{1}{n} \int_A a_n \dd \mu 
 \leq \int_A \underh g \dd \mu,
\end{equation*}
whence the sequence $(\frac{1}{n} \int_A a_n \dd \mu)_{n \geq 1}$ converges to its supremum, 
and:
\begin{equation}
 \label{eq:EgaliteIntegrales}
 \int_A \barh g \dd \mu 
 = \int_A \underh g \dd \mu.
\end{equation}
All is left is to prove that $\barh=\underh$ almost everywhere. For $M \geq 0$, take 
$A := \{\underh \leq M\}$. Then $\underh g$ is integrable on $A$,
and since $g>0$ a.e. Equation~\eqref{eq:EgaliteIntegrales} 
implies $\underh = \barh$ almost everywhere on $A$. Since this is true for all $M \geq 0$, 
we get that $\underh = \barh$ almost everywhere on $\{\underh < +\infty\}$, and 
obviously $\underh = \barh$ on $\{\underh = +\infty\}$.
\end{proof}

Let us finish the proof of Theorem~\ref{thm:Kingman}.

\begin{proof}[Proof of Theorem~\ref{thm:Kingman}]

Up to taking the opposite sequences, we work with super-additive sequences. 
Let $(a_n)_{n \geq 1}$ be a super-additive sequence, and $g$ a positive and integrable function. 
Write $a_1 = a_1^+ - a_1^-$ and $b_n := a_n + S_n a_1^-$. Then 
$(b_n)_{n \geq 1}$ and $g$ satisfy the hypotheses of Proposition~\ref{prop:SuperAdditif}, 
and so do $(S_n a_1^-)_{n \geq 1}$ and $g$. The (almost everywhere) limits and integrals 
concerning $(S_n a_1^-)_{n \geq 1}$ and $g$ are finite, so we can subtract them from the limits 
and integrals concerning $(b_n)_{n \geq 1}$ and $g$.
\end{proof}

\end{document}